\documentclass[12pt]{article}
\usepackage{geometry,amsthm,amssymb,amsmath,enumerate,float,cite,algorithm2e,verbatim}
\geometry{a4paper,left=2.5cm,right=2.5cm, top=2cm, bottom=2cm}

\newtheorem{theorem}{Theorem}

\newtheorem{lemma}{Lemma}

\title{Cubic graphs with equal independence number\\
and matching number}
\author{Elena Mohr \and Dieter Rautenbach}
\date{}

\begin{document}

\maketitle

{\small 
\begin{center}
Institute of Optimization and Operations Research, Ulm University, Germany\\
\texttt{\{elena.mohr,dieter.rautenbach\}@uni-ulm.de}
\end{center}
}

\begin{abstract}
Caro, Davila, and Pepper recently proved 
$\delta(G) \alpha(G)\leq \Delta(G) \mu(G)$
for every graph $G$ with 
minimum degree $\delta(G)$,
maximum degree $\Delta(G)$,
independence number $\alpha(G)$, and
matching number $\mu(G)$.
Answering some problems they posed, 
we characterize the extremal graphs 
for 
$\delta(G)<\Delta(G)$
as well as 
for $\delta(G)=\Delta(G)=3$.
\end{abstract}
{\small 
\begin{tabular}{lp{13cm}}
{\bf Keywords:} & Independence number; matching number\\
{\bf MSC 2010:} & 05C69, 
05C70 
\end{tabular}
}

\section{Introduction}

We consider finite, simple, and undirected graphs, and use standard terminology.
Recently, Caro, Davila, and Pepper \cite{cadape} 
proved the inequality
$$\delta(G) \alpha(G)\leq \Delta(G) \mu(G)$$
for every graph $G$ with 
minimum degree $\delta(G)$,
maximum degree $\Delta(G)$,
independence number $\alpha(G)$, and
matching number $\mu(G)$.
As an open problem they asked for 
the characterization of the extremal graphs,
that is, those graphs that satisfy this inequality with equality.
In particular, they asked for 
the characterization of the cubic graphs $G$
with $\alpha(G)=\mu(G)$.
In the present note, 
we give a simple proof of the above inequality,
which allows to characterize the non-regular extremal graphs.
Furthermore, we characterize the cubic extremal graphs.

\section{Results}

For positive integers $\delta$ and $\Delta$ with $\delta<\Delta$,
a bipartite graph is {\it $(\delta,\Delta)$-regular} if 
it has a bipartition with partite sets $A$ and $B$ 
such that every vertex in $A$ has degree $\delta$
and every vertex in $B$ has degree $\Delta$.
\begin{theorem}
If $G$ is a graph with minimum degree $\delta$
and maximum degree $\Delta$,
then 
\begin{eqnarray}\label{e1}
\delta \alpha(G)\leq \Delta \mu(G).
\end{eqnarray}
Furthermore, if $\delta<\Delta$, then
equality holds in (\ref{e1}) if and only if $G$ is 
bipartite and $(\delta,\Delta)$-regular.
\end{theorem}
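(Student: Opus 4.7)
My plan is to apply a König-type argument to the bipartite subgraph between a maximum independent set $I$ and its outer neighborhood $N(I)$, and then extract the structure of the equality case by tracing when each inequality is tight.

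\emph{Inequality.} I would pick a maximum independent set $I$ and let $H$ be the bipartite subgraph of $G$ with parts $I$ and $N(I)$ and all edges of $G$ between them. Since $I$ is independent, $\deg_H(v)=\deg_G(v)\geq\delta$ for every $v\in I$. Let $C$ be a minimum vertex cover of $H$ and write $C_I=C\cap I$, $C_N=C\cap N(I)$. Every edge incident to $I\setminus C_I$ must be covered by a vertex in $C_N$, so double counting these edges gives $\delta(|I|-|C_I|)\leq \Delta|C_N|$. Together with $\delta\leq\Delta$ and König's theorem, this yields
\[
\delta|I|\leq \delta|C_I|+\Delta|C_N|\leq \Delta(|C_I|+|C_N|)=\Delta\mu(H)\leq\Delta\mu(G),
\]
which is (\ref{e1}).

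\emph{Equality when $\delta<\Delta$.} For the forward direction I would trace which inequalities must be tight. Since $\delta<\Delta$, the middle step $\delta|C_I|\leq\Delta|C_I|$ is tight only if $C_I=\emptyset$, so $C\subseteq N(I)$. The remaining tight identity $\delta|I|=\Delta|C_N|$ combined with the degree bounds then forces every $v\in I$ to satisfy $\deg_G(v)=\delta$, and every $v\in C_N$ to satisfy $\deg_G(v)=\Delta$ with all its neighbors in $I$. Because $C$ is a vertex cover of $H$ with $C_I=\emptyset$, every neighbor of $I$ must lie in $C$, so $N(I)=C$ and, in particular, there are no edges within $N(I)$. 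Finally, any vertex $w$ outside $I\cup N(I)$ would have no neighbor in $I$, contradicting maximality of $I$, so $V(G)=I\cup C$ and $G$ is a bipartite $(\delta,\Delta)$-regular graph.

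\emph{Converse and main obstacle.} Conversely, for a bipartite $(\delta,\Delta)$-regular graph with sides $A$ (degree $\delta$) and $B$ (degree $\Delta$), Hall's condition on $B$ follows from $|N_G(S)|\geq(\Delta/\delta)|S|\geq|S|$ for $S\subseteq B$ by an edge count, yielding a matching saturating $B$, so $\mu(G)=|B|$ and hence $\alpha(G)=|A|$ by König and Gallai; the identity $\delta|A|=\Delta|B|$ then gives equality in (\ref{e1}). I expect the main obstacle to be the equality analysis: specifically, the clean use of $\delta<\Delta$ to pin down $C_I=\emptyset$, and the subsequent argument that no vertex of $G$ can lie outside $I\cup N(I)$, so that the local structure deduced from the tight inequalities genuinely describes the whole graph.
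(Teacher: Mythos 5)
Your proposal is correct and follows essentially the same route as the paper's proof: a maximum independent set $I$, the bipartite subgraph to its neighborhood, König's theorem with a minimum vertex cover, and the same double count of edges between the uncovered part of $I$ and the cover's other side, plus the same Hall/König argument for the converse. The only cosmetic difference is that you close the equality case by invoking maximality of $I$ to get $V(G)=I\cup N(I)$, whereas the paper first reduces to connected graphs and uses connectedness for that step; both are fine.
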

\begin{proof}
Let $I$ be a maximum independent set in $G$.
Let $R=V(G)\setminus I$, 
and let $H$ be the bipartite spanning subgraph of $G$
that contains all edges of $G$ between $I$ and $R$.
Let $M$ be a maximum matching in $H$,
and let $U$ be a minimum vertex cover in $H$.
See Figure \ref{fig-1} for an illustration.

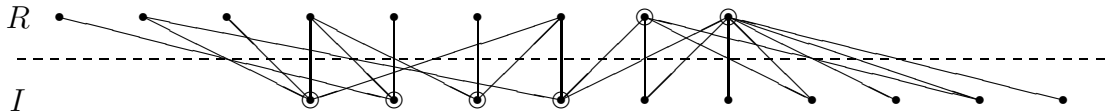
\begin{figure}[H]
$\mbox{}$\hfill
\unitlength 1.1mm 
\linethickness{0.4pt}
\ifx\plotpoint\undefined\newsavebox{\plotpoint}\fi 
\begin{picture}(130,17)(0,0)
\put(95,5){\circle*{1}}
\put(105,5){\circle*{1}}
\put(5,15){\circle*{1}}
\put(115,5){\circle*{1}}
\put(15,15){\circle*{1}}
\put(125,5){\circle*{1}}
\put(25,15){\circle*{1}}
\put(35,5){\circle*{1}}
\put(35,15){\circle*{1}}
\put(45,5){\circle*{1}}
\put(45,15){\circle*{1}}
\put(55,5){\circle*{1}}
\put(55,15){\circle*{1}}
\put(65,5){\circle*{1}}
\put(65,15){\circle*{1}}
\put(75,5){\circle*{1}}
\put(75,15){\circle*{1}}
\put(85,5){\circle*{1}}
\put(85,15){\circle*{1}}
\put(0,5){\makebox(0,0)[cc]{$I$}}
\put(0,15){\makebox(0,0)[cc]{$R$}}
\put(35,15){\line(0,-1){10}}
\put(45,15){\line(0,-1){10}}
\put(55,15){\line(0,-1){10}}
\put(65,15){\line(0,-1){10}}
\put(75,15){\line(0,-1){10}}
\put(85,15){\line(0,-1){10}}
\put(35,5){\circle{2}}
\put(45,5){\circle{2}}
\put(55,5){\circle{2}}
\put(65,5){\circle{2}}
\put(75,15){\circle{2}}
\put(85,15){\circle{2}}
\put(35,15){\line(1,-1){10}}
\put(35,15){\line(2,-1){20}}
\put(55,5){\line(1,1){10}}
\put(65,15){\line(-3,-1){30}}
\put(35,5){\line(-1,1){10}}
\put(5,15){\line(4,-1){40}}

\put(15,15){\line(5,-1){50}}
\put(75,15){\line(-1,-1){10}}
\put(65,5){\line(2,1){20}}
\put(85,15){\line(-1,-1){10}}
\put(15,15){\line(2,-1){20}}

\put(125,5){\line(-4,1){40}}
\put(85,15){\line(3,-1){30}}
\put(115,5){\line(-4,1){40}}
\put(75,15){\line(2,-1){20}}
\put(95,5){\line(-1,1){10}}
\put(85,15){\line(2,-1){20}}
\put(-.07,9.93){\line(1,0){.9924}}
\put(1.914,9.93){\line(1,0){.9924}}
\put(3.899,9.93){\line(1,0){.9924}}
\put(5.884,9.93){\line(1,0){.9924}}
\put(7.869,9.93){\line(1,0){.9924}}
\put(9.853,9.93){\line(1,0){.9924}}
\put(11.838,9.93){\line(1,0){.9924}}
\put(13.823,9.93){\line(1,0){.9924}}
\put(15.808,9.93){\line(1,0){.9924}}
\put(17.792,9.93){\line(1,0){.9924}}
\put(19.777,9.93){\line(1,0){.9924}}
\put(21.762,9.93){\line(1,0){.9924}}
\put(23.747,9.93){\line(1,0){.9924}}
\put(25.731,9.93){\line(1,0){.9924}}
\put(27.716,9.93){\line(1,0){.9924}}
\put(29.701,9.93){\line(1,0){.9924}}
\put(31.685,9.93){\line(1,0){.9924}}
\put(33.67,9.93){\line(1,0){.9924}}
\put(35.655,9.93){\line(1,0){.9924}}
\put(37.64,9.93){\line(1,0){.9924}}
\put(39.624,9.93){\line(1,0){.9924}}
\put(41.609,9.93){\line(1,0){.9924}}
\put(43.594,9.93){\line(1,0){.9924}}
\put(45.579,9.93){\line(1,0){.9924}}
\put(47.563,9.93){\line(1,0){.9924}}
\put(49.548,9.93){\line(1,0){.9924}}
\put(51.533,9.93){\line(1,0){.9924}}
\put(53.517,9.93){\line(1,0){.9924}}
\put(55.502,9.93){\line(1,0){.9924}}
\put(57.487,9.93){\line(1,0){.9924}}
\put(59.472,9.93){\line(1,0){.9924}}
\put(61.456,9.93){\line(1,0){.9924}}
\put(63.441,9.93){\line(1,0){.9924}}
\put(65.426,9.93){\line(1,0){.9924}}
\put(67.411,9.93){\line(1,0){.9924}}
\put(69.395,9.93){\line(1,0){.9924}}
\put(71.38,9.93){\line(1,0){.9924}}
\put(73.365,9.93){\line(1,0){.9924}}
\put(75.35,9.93){\line(1,0){.9924}}
\put(77.334,9.93){\line(1,0){.9924}}
\put(79.319,9.93){\line(1,0){.9924}}
\put(81.304,9.93){\line(1,0){.9924}}
\put(83.288,9.93){\line(1,0){.9924}}
\put(85.273,9.93){\line(1,0){.9924}}
\put(87.258,9.93){\line(1,0){.9924}}
\put(89.243,9.93){\line(1,0){.9924}}
\put(91.227,9.93){\line(1,0){.9924}}
\put(93.212,9.93){\line(1,0){.9924}}
\put(95.197,9.93){\line(1,0){.9924}}
\put(97.182,9.93){\line(1,0){.9924}}
\put(99.166,9.93){\line(1,0){.9924}}
\put(101.151,9.93){\line(1,0){.9924}}
\put(103.136,9.93){\line(1,0){.9924}}
\put(105.121,9.93){\line(1,0){.9924}}
\put(107.105,9.93){\line(1,0){.9924}}
\put(109.09,9.93){\line(1,0){.9924}}
\put(111.075,9.93){\line(1,0){.9924}}
\put(113.059,9.93){\line(1,0){.9924}}
\put(115.044,9.93){\line(1,0){.9924}}
\put(117.029,9.93){\line(1,0){.9924}}
\put(119.014,9.93){\line(1,0){.9924}}
\put(120.998,9.93){\line(1,0){.9924}}
\put(122.983,9.93){\line(1,0){.9924}}
\put(124.968,9.93){\line(1,0){.9924}}
\put(126.953,9.93){\line(1,0){.9924}}
\put(128.937,9.93){\line(1,0){.9924}}
\end{picture}
\hfill$\mbox{}$
\caption{The partition into $I$ and $R$.
The vertical edges form the matching $M$, 
and the encircled vertices form the vertex cover $U$.}\label{fig-1}
\end{figure}
Since $G$ is bipartite, 
K\"{o}nig's theorem \cite{ko} implies that
$\mu(H)=|M|=|U|$, and that $U$ intersects each edge in $M$ in exactly one vertex.
Let $m$ be the number of edges of $H$ between $I\setminus U$
and $U\cap R$.
Since each edge leaving $I\setminus U$ enters $U\cap R$,
we obtain, for $k=|I\cap U|$, that 
\begin{eqnarray}\label{e2}
\delta (\alpha(G)-k)=\delta |I\setminus U|\leq m\leq 
\Delta|U\cap R|=
\Delta(\mu(H)-k)\leq \Delta(\mu(G)-k).
\end{eqnarray}
This implies 
\begin{eqnarray}\label{e3}
\delta \alpha(G)\leq \delta \alpha(G)+(\Delta-\delta)k\leq \Delta \mu(H)\leq \Delta \mu(G),
\end{eqnarray}
that is, the inequality (\ref{e1}) follows.

We proceed to the characterization of the graphs $G$
that satisfy $\delta \alpha(G)=\Delta \mu(G)$
for $\delta<\Delta$.
Since $\alpha(G)$ and $\mu(G)$ are additive with respect to the components, it suffices to characterize the connected graphs.
If $G$ is bipartite and $(\delta,\Delta)$-regular,
and has partite sets $A$ and $B$ as above,
then Hall's theorem \cite{ha} and K\"{o}nig's theorem imply that 
$\mu(G)=|B|$, and that $\alpha(G)=n-\mu(G)=|A|$.
Furthermore, the number of edges of $G$ equals 
$\delta |A|=\delta \alpha(G)$
and 
$\Delta |B|=\Delta \mu(G)$,
that is, $\delta \alpha(G)=\Delta \mu(G)$.
Now, let $G$ be a connected graph
with $\delta \alpha(G)=\Delta \mu(G)$.
If $H$, $M$, $U$, $m$, and $k$ are as above,
then equality holds throughout (\ref{e2}) and (\ref{e3}).
This implies that $k=0$
and $\delta |I\setminus U|=m=\Delta|U\cap R|$,
which implies that 
$G[(I\setminus U)\cup (U\cap R)]$
is a non-empty bipartite graph, where 
every vertex in $I\setminus U$
has degree $\delta$
and 
every vertex in $U\cap R$
has degree $\Delta$.
Since all edges in $G$ leaving $I\setminus U$ enter $U\cap R$,
this implies that no edge of $G$ has only one endpoint in 
$(I\setminus U)\cup (U\cap R)$.
Since $G$ is connected, this implies 
that $G$ equals $G[(I\setminus U)\cup (U\cap R)]$,
that is bipartite and $(\delta,\Delta)$-regular.
This completes the proof. 
\end{proof}
Note that all cycles satisfy (\ref{e1}) with equality,
in particular, there are non-bipartite extremal graphs.
For higher degrees of regularity, that is, for $\delta=\Delta\geq 3$,
the extremal graphs have a richer structure,
which we elucidate for $\delta=\Delta=3$.
A graph $G$ is a {\it bubble with contact vertex $z$ and partition $(I,R)$} 
if the vertex set of $G$ can be partitioned into two sets $I$ and $R$ such that 
\begin{itemize}
\item every vertex in $V(G)\setminus \{ z\}$ has degree $3$ and $z$ has degree $2$,
\item $I$ is independent, and
\item $z$ lies in $R$ and $G[R]$ contains exactly one edge.
\end{itemize}
Since a bubble $G$ has degree sequence $3,\ldots,3,2$, it is not bipartite.
Counting the edges of $G$ implies that $|R|=|I|+1$,
and, hence, $|I|=(n(G)-1)/2$.
Figure \ref{fig1} illustrates some connected bubbles.

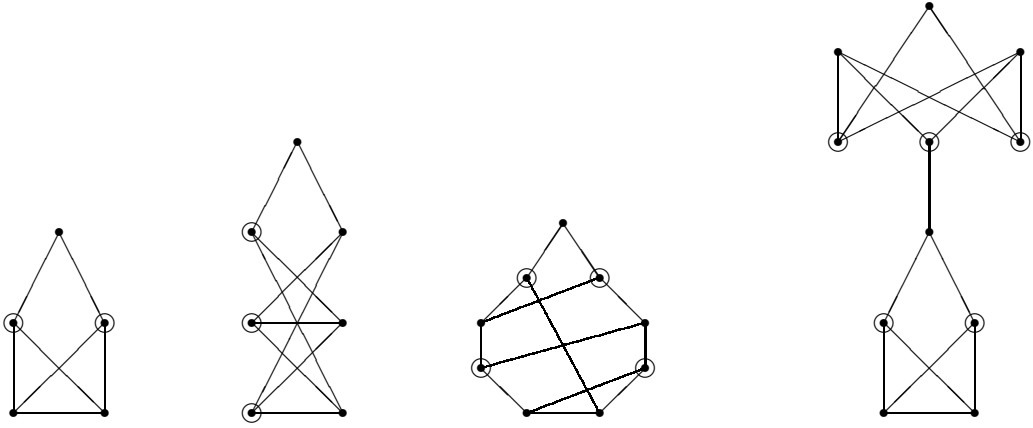
\begin{figure}[H]
$\mbox{}$\hfill
\unitlength 1.2mm 
\linethickness{0.4pt}
\ifx\plotpoint\undefined\newsavebox{\plotpoint}\fi 
\begin{picture}(17,26)(0,0)
\put(5,5){\circle*{1}}
\put(5,15){\circle*{1}}
\put(15,5){\circle*{1}}
\put(15,15){\circle*{1}}
\put(10,25){\circle*{1}}
\put(10,25){\line(-1,-2){5}}
\put(10,25){\line(1,-2){5}}
\put(15,15){\line(0,-1){10}}
\put(15,5){\line(-1,0){10}}
\put(5,5){\line(0,1){10}}
\put(5,15){\line(1,-1){10}}
\put(15,15){\line(-1,-1){10}}
\put(15,15){\circle{2}}
\put(5,15){\circle{2}}
\end{picture}
\hfill
\linethickness{0.4pt}
\ifx\plotpoint\undefined\newsavebox{\plotpoint}\fi 
\begin{picture}(16,36)(0,0)
\put(5,5){\circle*{1}}
\put(15,5){\circle*{1}}
\put(5,15){\circle*{1}}
\put(15,15){\circle*{1}}
\put(5,25){\circle*{1}}
\put(15,25){\circle*{1}}
\put(10,35){\circle*{1}}
\put(15,5){\line(-1,0){10}}
\put(5,5){\line(1,1){10}}
\put(15,15){\line(-1,0){10}}
\put(5,15){\line(1,-1){10}}
\put(15,5){\line(-1,2){10}}
\put(5,25){\line(1,2){5}}
\put(10,35){\line(1,-2){5}}
\put(15,25){\line(-1,-2){10}}
\put(5,15){\line(1,1){10}}
\put(5,25){\line(1,-1){10}}
\put(5,25){\circle{2}}
\put(5,15){\circle{2}}
\put(5,5){\circle{2}}
\end{picture}
\hfill
\linethickness{0.4pt}
\ifx\plotpoint\undefined\newsavebox{\plotpoint}\fi 
\begin{picture}(25,27)(0,0)
\put(5,10){\circle*{1}}
\put(23,10){\circle*{1}}
\put(10,5){\circle*{1}}
\put(10,20){\circle*{1}}
\put(18,5){\circle*{1}}
\put(18,20){\circle*{1}}
\put(5,15){\circle*{1}}
\put(23,15){\circle*{1}}
\put(14,26){\circle*{1}}
\put(18,20){\line(1,-1){5}}
\put(23,15){\line(0,-1){5}}
\put(23,10){\line(-1,-1){5}}
\put(18,5){\line(-1,0){8}}
\put(10,5){\line(-1,1){5}}
\put(5,10){\line(0,1){5}}
\put(5,15){\line(1,1){5}}
\put(10,20){\line(2,3){4}}
\put(14,26){\line(2,-3){4}}
\multiput(18,20)(-.087248322,-.033557047){149}{\line(-1,0){.087248322}}
\multiput(23,15)(-.120805369,-.033557047){149}{\line(-1,0){.120805369}}
\multiput(23,10)(-.087248322,-.033557047){149}{\line(-1,0){.087248322}}
\multiput(10,20)(.033613445,-.06302521){238}{\line(0,-1){.06302521}}
\put(18,20){\circle{2}}
\put(10,20){\circle{2}}
\put(23,10){\circle{2}}
\put(5,10){\circle{2}}
\end{picture}
\hfill
\linethickness{0.4pt}
\ifx\plotpoint\undefined\newsavebox{\plotpoint}\fi 
\begin{picture}(32,51)(0,0)
\put(15,5){\circle*{1}}
\put(15,15){\circle*{1}}
\put(25,5){\circle*{1}}
\put(25,15){\circle*{1}}
\put(20,25){\circle*{1}}
\put(20,25){\line(-1,-2){5}}
\put(20,25){\line(1,-2){5}}
\put(25,15){\line(0,-1){10}}
\put(25,5){\line(-1,0){10}}
\put(15,5){\line(0,1){10}}
\put(15,15){\line(1,-1){10}}
\put(25,15){\line(-1,-1){10}}
\put(25,15){\circle{2}}
\put(15,15){\circle{2}}
\put(20,35){\circle*{1}}
\put(30,35){\circle*{1}}
\put(30,45){\circle*{1}}
\put(10,35){\circle*{1}}
\put(10,45){\circle*{1}}
\put(20,35){\line(0,-1){10}}
\put(10,45){\line(0,-1){10}}
\put(20,35){\line(1,1){10}}
\put(30,45){\line(0,-1){10}}
\put(10,45){\line(1,-1){10}}
\put(10,45){\line(2,-1){20}}
\put(30,45){\line(-2,-1){20}}
\put(20,50){\circle*{1}}
\put(30,35){\line(-2,3){10}}
\put(20,50){\line(-2,-3){10}}
\put(10,35){\circle{2}}
\put(20,35){\circle{2}}
\put(30,35){\circle{2}}
\end{picture}
\hfill$\mbox{}$
\caption{Four connected bubbles; 
the contact vertices are the topmost vertices,
the encircled vertices form the sets $I$, and 
the rightmost bubble illustrates 
that a bubble may properly contain a smaller bubble.}\label{fig1}
\end{figure}

\begin{lemma}\label{lemma1}
If $G$ is a bubble with contact vertex $z$ and partition $(I,R)$,
then 
$$\alpha(G)=\alpha(G-z)=\mu(G)=\mu(G-z)=(n(G)-1)/2.$$
Furthermore, if $G$ is not $2$-connected, 
then some proper induced subgraph $G'$ of $G$ is also a bubble
with partition $(I',R')$ such that $I'\subseteq I$ and $R'\subseteq R$.
\end{lemma}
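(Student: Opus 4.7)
Since $I$ is independent of size $(n(G)-1)/2$ and $z \in R$, we already get $\alpha(G), \alpha(G-z) \geq (n(G)-1)/2$; as $n(G)$ is odd we also get $\mu(G), \mu(G-z) \leq (n(G)-1)/2$. For the matching lower bounds I follow the pattern of Theorem~1: form the bipartite subgraph $H$ of $G$ between $I$ and $R$ with all crossing edges. Every vertex of $I$ has degree exactly $3$ in $H$ (since $I$ is independent), and every vertex of $R$ has degree at most $3$, so a one-line edge count yields Hall's condition $|N_H(S)|\geq|S|$ for $S \subseteq I$; hence $H$ contains a matching saturating $I$. The same argument applied to $H-z$, in which at most two vertices of $I$ lose an edge (as $\deg_H(z)\leq 2$), still verifies Hall's condition and gives a matching of $G-z$ saturating $I$; since $|V(G-z)|=2|I|$, this matching is perfect. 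For the upper bound $\alpha(G)\leq (n(G)-1)/2$, I would assume an independent set $I^*$ of size $(n(G)+1)/2$ and count: the number of edges of $G$ with an endpoint in $I^*$ equals $3|I^*|-[z\in I^*]$, and both possible values exceed $|E(G)|=(3n(G)-1)/2$, a contradiction. Then $\alpha(G-z)\leq\alpha(G)$ closes the chain.

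\textbf{Extracting the smaller bubble.} Assume $G$ is not $2$-connected. If $G$ is disconnected, take $G'$ to be the component of $G$ containing $z$ and apply the edge count below with $G'$ in the role of the endblock; otherwise $G$ is connected with at least two blocks. Since $G$ has no vertex of degree $1$, every endblock of $G$ is $2$-connected with at least three vertices. The vertex $z$ (of degree $2$) either lies in a single $2$-connected block of $G$, or its two incident edges are both bridges; in the latter case both opposite endpoints are cut vertices (they have degree $3$), so each such bridge has two cut vertices and is an internal block. Hence $z$ lies in at most one endblock, and since the block-cut tree has at least two leaves some endblock $B$ of $G$ is $z$-free. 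Let $c$ denote its unique cut vertex: two of $c$'s three neighbors lie in $V(B)$ and the third outside, so in $G':=G[V(B)]$ the vertex $c$ has degree $2$ while every other vertex of $V(B)$, being neither $z$ nor a cut vertex, retains its full degree $3$. Any block adjacent to $B$ in the block-cut tree contributes a vertex outside $V(B)$, so $G'$ is a proper induced subgraph of $G$ with the correct degree sequence.

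\textbf{Main obstacle: the partition check via mod-$3$.} It remains to show that the forced partition $I':=I\cap V(B)$ and $R':=R\cap V(B)$ makes $G'$ a bubble, which reduces to proving $c\in R'$ and $|E(G'[R'])|=1$. The planned argument is a short edge count modulo $3$. Writing $|V(B)|=2m+1$ gives $|E(B)|=3m+1$, and since $I'$ is independent the number of edges of $B$ with an endpoint in $I'$ equals its degree sum in $B$, namely $3|I'|$ if $c\in R$ and $3|I'|-1$ if $c\in I$. Subtracting from $|E(B)|$ yields
\[
|E(G'[R'])| = 3(m-|I'|)+1 \qquad \text{or} \qquad 3(m-|I'|)+2,
\]
respectively. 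But $G'[R']$ is a subgraph of $G[R]$ and hence contains at most one edge, which rules out the residue $2\pmod 3$; therefore $c\in R'$, then $|I'|=m$ is forced, and finally $|E(G'[R'])|=1$. This makes $G'=G[V(B)]$ a bubble with partition $(I',R')$ and contact vertex $c$, as required.
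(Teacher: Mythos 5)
Your argument is correct, and it deviates from the paper's proof in both halves, most substantially in the second. For the equalities, the paper also applies Hall's theorem to the bipartite graph between $I$ and $R$, but it constructs matchings $M_u$ of $G-u$ saturating $I$ for every $u\in\{x,y,z\}$ and uses the perfect matchings of $G-x$ and $G-y$ to bound $\alpha(G)$ from above, while bounding $\mu(G)$ by distinguishing whether a matching contains the edge $xy$; your parity bound $\mu(G)\leq (n(G)-1)/2$ and your degree-sum count against $|E(G)|=(3n(G)-1)/2$ replace both steps and are, if anything, shorter --- just make the small integrality step explicit in the Hall check for $H-z$: from $3|N_{H-z}(S)|\geq 3|S|-2$ you conclude $|N_{H-z}(S)|\geq |S|$ because both sides of Hall's condition are integers. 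For the second statement, the paper shows that a connected subcubic graph that is not $2$-connected has a bridge $uv$ and runs a mod-$3$ degree count on the two components of $G-uv$, ruling out $u,v\in R$ and concluding that the side containing $xy$ but not $z$ is a bubble; you instead pass to the block decomposition, note that every endblock is $2$-connected (minimum degree $2$ excludes $K_2$-endblocks), that $z$ lies in at most one endblock (its two edges lie in one block, or they are two bridges whose blocks contain two cut vertices each and hence are internal), choose an endblock $B$ avoiding $z$, and run essentially the same mod-$3$ count inside $B$ to force $c\in R'$ and $|E(B[R'])|=1$; your handling of the disconnected case (component of $z$) agrees with the paper's (component of $xy$), since your count shows these coincide. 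Both routes rest on the same mod-$3$ obstruction; the paper's needs only the existence of a bridge, whereas yours invokes standard block/cut-vertex facts (blocks are induced, a vertex in two blocks is a cut vertex, the block tree has at least two leaf blocks) but delivers a $2$-connected sub-bubble in a single step, which is precisely the form in which the lemma is exploited in the proof of Theorem \ref{theorem2}, where the paper has to iterate the lemma to ``assume that $B$ is $2$-connected.''
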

\begin{proof}
Let $p=(n(G)-1)/2$, and let $xy$ be the unique edge of $G[R]$.
Note that $z$ may coincide with $x$ or $y$.

Since every matching $M$ of $G$ 
either contains $xy$ and at most $|R|-2$ further edges
incident with the vertices in $R\setminus \{ x,y\}$
or does not contain $xy$ and at most $|I|$ edges
incident with the vertices in $I$,
we have $\mu(G)\leq |R|-1=|I|=p$.

Let $u$ be any vertex from $\{ x,y,z\}$.
For some set $S\subseteq I$, let $T=N_G(S)$.
Since $I$ is independent, 
we obtain $T\subseteq R$,
and the vertex degrees imply $|T|\geq |S|$.
Furthermore, if $T$ contains $u$,
then the edge $xy$ and the degree of $z$ imply
that $|T|\geq |S|+1$.
Altogether, $|N_{G-u}(S)|=|T\setminus \{ u\}|\geq |S|$
for every set $S\subseteq I$,
and Hall's Theorem implies the existence of a matching $M_u$ in $G-u$
that saturates each vertex in $I$, in particular, 
$\mu(G)=\mu(G-z)=p$.

Now, let $J$ be a maximum independent set in $G$.
If $J$ does not contain $x$, then $M_x$ and the edges $xy$ imply that $|J|\leq p$.
Similarly, if $J$ does not contain $y$, then $|J|\leq p$,
which implies $\alpha(G)\leq p$.
Since $I$ is an independent set of order $p$, 
we obtain that $\alpha(G)=\alpha(G-z)=p$.

Finally, suppose that $G$ is not $2$-connected.
If $G$ is not connected, 
then the vertex degrees easily imply
that the component of $G$ containing the edge $xy$ 
is also a bubble
with a partition $(I',R')$ such that $I'\subseteq I$ and $R'\subseteq R$.
Hence, we may assume that $G$ is connected but not $2$-connected.
Since $G$ is subcubic, this implies that $G$ has a bridge $uv$.
Let $G_u$ and $G_v$ be the components of $G-uv$ containing $u$ and $v$, respectively.
Let $I_u=V(G_u)\cap I$, 
$d_1=\sum_{w\in I_u}d_{G_u}(w)$, 
$R_u=V(G_u)\cap R$, and 
$d_2=\sum_{w\in R_u}d_{G_u}(w)$.
If $u,v\in R$, then $uv$ is the unique edge $xy$ in $G[R]$,
the graph $G_u$ is a bipartite graph with partite sets $I_u$ and $R_u$,
but $d_1$ and $d_2$ have different parities modulo $3$, which is a contradiction.
Hence, by symmetry, we may assume that $u\in R$ and $v\in I$.
Since $d_1$ is a multiple of $3$,
it follows that the unique edge $xy$ of $G[R]$ lies within $R_u$,
and that the contact vertex of $G$ does not lie $R_u$.
Hence, $G_u$ is a bubble with partition $(I_u,R_u)$, 
which completes the proof.
\end{proof}
A graph $G$ is {\it special} if it is connected, cubic, 
and the vertex set of $G$ can be partitioned into sets $V_0,V_1,\ldots,V_\ell$ such that
\begin{itemize}
\item the graph $G[V_0]$ is a non-empty bipartite graph with partite sets $I_0$ 
and $R_0$ 
such that every vertex in $R_0$ has degree $3$ in $G[V_0]$, and 
\item for every $i$ in $[\ell]$,
the graph $G[V_i]$ is a $2$-connected bubble with contact vertex $z_i$.
\end{itemize}
Note that, since $G$ is connected and $V_0$ is non-empty, 
it follows that $G[V_0]$ is connected, and that,
for every $i$ in $[\ell]$, 
the graph $G$ contains a bridge between $z_i$ 
and some vertex in $I_0$.
Since $G$ is cubic,
this implies $\ell=\sum_{u\in I_0}(3-d_{G[V_0]}(u))$.
In particular, if $\ell=0$, then $G$ is bipartite.
See Figure \ref{fig2} for an illustration.

\begin{figure}[H]
$\mbox{}$\hfill
\unitlength 1mm 
\linethickness{0.4pt}
\ifx\plotpoint\undefined\newsavebox{\plotpoint}\fi 
\begin{picture}(150,20)(0,0)
\put(10,5){\circle*{1}}
\put(45,5){\circle*{1}}
\put(80,15){\circle*{1}}
\put(20,5){\circle*{1}}
\put(55,5){\circle*{1}}
\put(90,15){\circle*{1}}
\put(5,15){\circle*{1}}
\put(40,15){\circle*{1}}
\put(75,5){\circle*{1}}
\put(120,5){\circle*{1}}
\put(110,15){\circle*{1}}
\put(15,15){\circle*{1}}
\put(50,15){\circle*{1}}
\put(85,5){\circle*{1}}
\put(130,5){\circle*{1}}
\put(120,15){\circle*{1}}
\put(25,15){\circle*{1}}
\put(60,15){\circle*{1}}
\put(95,5){\circle*{1}}
\put(140,5){\circle*{1}}
\put(140,15){\circle*{1}}
\put(130,15){\circle*{1}}
\put(5,15){\line(1,0){10}}
\put(40,15){\line(1,0){10}}
\put(15,15){\line(1,-2){5}}
\put(50,15){\line(1,-2){5}}
\put(20,5){\line(1,2){5}}
\put(55,5){\line(1,2){5}}
\put(25,15){\line(-3,-2){15}}
\put(60,15){\line(-3,-2){15}}
\put(10,5){\line(-1,2){5}}
\put(45,5){\line(-1,2){5}}
\put(5,15){\line(3,-2){15}}
\put(40,15){\line(3,-2){15}}
\put(15,15){\line(-1,-2){5}}
\put(50,15){\line(-1,-2){5}}
\put(80,15){\line(-1,-2){5}}
\put(75,5){\line(3,2){15}}
\put(90,15){\line(-1,-2){5}}
\put(85,5){\line(-1,2){5}}
\put(80,15){\line(3,-2){15}}
\put(95,5){\line(-1,2){5}}
\put(60,15){\line(3,-2){15}}
\put(120,15){\line(-1,0){10}}
\put(110,15){\line(-3,-2){15}}
\put(120,5){\line(1,1){10}}
\put(130,15){\line(0,-1){10}}
\put(130,5){\line(1,1){10}}
\put(140,15){\line(0,-1){10}}
\put(140,5){\line(-1,1){10}}
\put(130,5){\line(-1,1){10}}
\put(120,15){\line(2,-1){20}}
\put(120,5){\line(2,1){20}}
\put(-.07,9.93){\line(1,0){.9934}}
\put(1.916,9.93){\line(1,0){.9934}}
\put(3.903,9.93){\line(1,0){.9934}}
\put(5.89,9.93){\line(1,0){.9934}}
\put(7.877,9.93){\line(1,0){.9934}}
\put(9.863,9.93){\line(1,0){.9934}}
\put(11.85,9.93){\line(1,0){.9934}}
\put(13.837,9.93){\line(1,0){.9934}}
\put(15.824,9.93){\line(1,0){.9934}}
\put(17.811,9.93){\line(1,0){.9934}}
\put(19.797,9.93){\line(1,0){.9934}}
\put(21.784,9.93){\line(1,0){.9934}}
\put(23.771,9.93){\line(1,0){.9934}}
\put(25.758,9.93){\line(1,0){.9934}}
\put(27.744,9.93){\line(1,0){.9934}}
\put(29.731,9.93){\line(1,0){.9934}}
\put(31.718,9.93){\line(1,0){.9934}}
\put(33.705,9.93){\line(1,0){.9934}}
\put(35.691,9.93){\line(1,0){.9934}}
\put(37.678,9.93){\line(1,0){.9934}}
\put(39.665,9.93){\line(1,0){.9934}}
\put(41.652,9.93){\line(1,0){.9934}}
\put(43.638,9.93){\line(1,0){.9934}}
\put(45.625,9.93){\line(1,0){.9934}}
\put(47.612,9.93){\line(1,0){.9934}}
\put(49.599,9.93){\line(1,0){.9934}}
\put(51.585,9.93){\line(1,0){.9934}}
\put(53.572,9.93){\line(1,0){.9934}}
\put(55.559,9.93){\line(1,0){.9934}}
\put(57.546,9.93){\line(1,0){.9934}}
\put(59.532,9.93){\line(1,0){.9934}}
\put(61.519,9.93){\line(1,0){.9934}}
\put(63.506,9.93){\line(1,0){.9934}}
\put(65.493,9.93){\line(1,0){.9934}}
\put(67.479,9.93){\line(1,0){.9934}}
\put(69.466,9.93){\line(1,0){.9934}}
\put(71.453,9.93){\line(1,0){.9934}}
\put(73.44,9.93){\line(1,0){.9934}}
\put(75.426,9.93){\line(1,0){.9934}}
\put(77.413,9.93){\line(1,0){.9934}}
\put(79.4,9.93){\line(1,0){.9934}}
\put(81.387,9.93){\line(1,0){.9934}}
\put(83.373,9.93){\line(1,0){.9934}}
\put(85.36,9.93){\line(1,0){.9934}}
\put(87.347,9.93){\line(1,0){.9934}}
\put(89.334,9.93){\line(1,0){.9934}}
\put(91.32,9.93){\line(1,0){.9934}}
\put(93.307,9.93){\line(1,0){.9934}}
\put(95.294,9.93){\line(1,0){.9934}}
\put(97.281,9.93){\line(1,0){.9934}}
\put(99.267,9.93){\line(1,0){.9934}}
\put(101.254,9.93){\line(1,0){.9934}}
\put(103.241,9.93){\line(1,0){.9934}}
\put(105.228,9.93){\line(1,0){.9934}}
\put(107.214,9.93){\line(1,0){.9934}}
\put(109.201,9.93){\line(1,0){.9934}}
\put(111.188,9.93){\line(1,0){.9934}}
\put(113.175,9.93){\line(1,0){.9934}}
\put(115.161,9.93){\line(1,0){.9934}}
\put(117.148,9.93){\line(1,0){.9934}}
\put(119.135,9.93){\line(1,0){.9934}}
\put(121.122,9.93){\line(1,0){.9934}}
\put(123.109,9.93){\line(1,0){.9934}}
\put(125.095,9.93){\line(1,0){.9934}}
\put(127.082,9.93){\line(1,0){.9934}}
\put(129.069,9.93){\line(1,0){.9934}}
\put(131.056,9.93){\line(1,0){.9934}}
\put(133.042,9.93){\line(1,0){.9934}}
\put(135.029,9.93){\line(1,0){.9934}}
\put(137.016,9.93){\line(1,0){.9934}}
\put(139.003,9.93){\line(1,0){.9934}}
\put(140.989,9.93){\line(1,0){.9934}}
\put(142.976,9.93){\line(1,0){.9934}}
\put(144.963,9.93){\line(1,0){.9934}}
\put(146.95,9.93){\line(1,0){.9934}}
\put(148.936,9.93){\line(1,0){.9934}}
\put(0,5){\makebox(0,0)[cc]{$I$}}
\put(0,15){\makebox(0,0)[cc]{$R$}}
\put(25,19){\makebox(0,0)[cc]{$z_1$}}
\put(110,15){\line(1,-1){10}}
\put(110,19){\makebox(0,0)[cc]{$z_3$}}
\put(60,19){\makebox(0,0)[cc]{$z_2$}}
\qbezier(85,5)(41,-6)(25,15)
\put(73,3){\framebox(24,14)[cc]{}}
\put(85,20){\makebox(0,0)[cc]{$V_0$}}
\end{picture}
\hfill$\mbox{}$
\caption{A cubic graph $G$ with $\alpha(G)=\mu(G)=10$.}\label{fig2}
\end{figure}
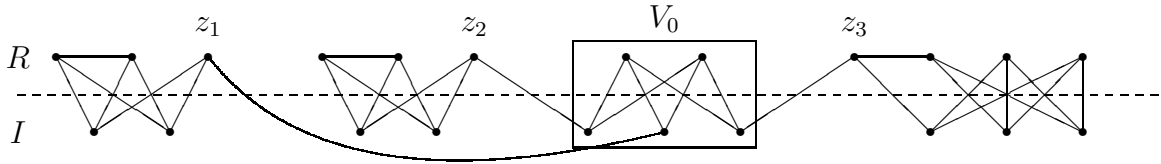

\begin{theorem}\label{theorem2}
A connected cubic graph $G$ satisfies $\alpha(G)=\mu(G)$ if and only if it is special.
\end{theorem}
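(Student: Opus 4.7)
The plan splits into the two implications. For the ``if'' direction (special $\Rightarrow \alpha=\mu$), I would use Theorem~1 with $\delta=\Delta=3$ to obtain $\alpha(G) \le \mu(G)$ for free, and then produce a witnessing independent set $S := I_0 \cup \bigcup_{i=1}^{\ell} I_i$. Its independence is immediate from the three possible types of edges in a special graph---edges inside $G[V_0]$ (which join $R_0$ to $I_0$), edges inside some $G[V_i]$ (avoided because each $I_i$ is independent), and the bridges from $z_i \in R_i$ to $I_0$---none of which has both endpoints in $S$. To match this from above on $\mu(G)$, I would partition any matching $M$ by edge location: Lemma~1 bounds $|M \cap E(G[V_i])|$ by $(n(G[V_i])-1)/2 = |I_i|$, while the remaining edges of $M$ form a matching in the bipartite graph $H := G[V_0] + \text{bridges}$ with bipartition $I_0$ versus $R_0 \cup \{z_1,\ldots,z_\ell\}$, for which $I_0$ is a vertex cover of size $|I_0|$. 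Summing gives $\mu \le |I_0| + \sum_i |I_i| \le \alpha$, so equality holds throughout.

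For the ``only if'' direction, I would proceed by strong induction on $n(G)$. The bipartite case is routine: a connected bipartite cubic graph has balanced bipartition, giving $\alpha=\mu=n/2$, and the trivial decomposition $V_0 = V(G)$, $\ell=0$ is special. In the non-bipartite case I would first show that $G$ has a bridge; otherwise Petersen's theorem yields a perfect matching of $G$, so $\mu = n/2$, hence $\alpha = n/2$, and then a one-line degree count (each of the $n/2$ vertices of a maximum independent set contributes its three edges to the complement, accounting for all $3n/2$ edges) forces the complement to be independent as well, making $G$ bipartite---a contradiction.

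Given a bridge $uv$, let $G_u$ and $G_v$ be the components of $G-uv$; both have odd order because each contains a unique degree-$2$ vertex. The crucial structural step is to show that one of these components, viewed as a subgraph with $u$ (resp.\ $v$) of degree $2$, is a bubble with that endpoint as contact vertex. I would do this by decomposing a maximum independent set and a maximum matching of $G$ across the bridge, coupling the equality $\alpha(G)=\mu(G)$ to the $(2,3)$-inequality $2\alpha(G_u) \le 3\mu(G_u)$ obtained by applying Theorem~1 to $G_u$, and using the odd-order parity to squeeze out $\alpha(G_u)=\mu(G_u)=(n(G_u)-1)/2$ together with the identification of a single ``leftover'' edge in the $R$-side---precisely the combinatorial signature of a bubble. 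If this $G_u$ is not 2-connected, Lemma~1 lets me replace it by a minimal 2-connected bubble $B \subsetneq G_u$ (iterating the ``non-2-connected bubble contains a smaller bubble'' clause). I would then set $V_1 := V(B)$ with contact $z_1$, remove $V_1$ from $G$, and recover the special decomposition of the remainder either by strengthening the inductive hypothesis to ``almost-cubic'' graphs with a single prescribed degree-$2$ vertex, or by iterating the bubble-peeling procedure until only the bipartite core $V_0$ remains.

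The main obstacle is the bubble-on-one-side claim: it is precisely here that the tight equality $\alpha=\mu$ must be exploited twice, globally on $G$ and locally via the $(2,3)$-bound on $G_u$, and one must leverage odd parity and the $\alpha$--$\mu$ coincidence to pin down the unique $G_u[R]$-edge. A secondary difficulty is formulating the inductive step cleanly, since naively removing a bubble produces a graph that is not cubic; either a generalized inductive hypothesis or a simultaneous peeling argument will be needed to bridge this gap.
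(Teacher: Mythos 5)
Your ``if'' direction is sound and is essentially the paper's own argument: the independent set $I_0\cup I_1\cup\dots\cup I_\ell$, the bound $\mu(G)\le |I_0|+\sum_i(n(G[V_i])-1)/2$ obtained from Lemma~1 together with the bipartite graph on $I_0$ versus $R_0\cup\{z_1,\dots,z_\ell\}$, and inequality~(1) to close the sandwich. The ``only if'' direction, however, has a genuine gap at exactly the step you yourself flag as crucial: it is \emph{false} that for a bridge $uv$ of a connected cubic graph with $\alpha=\mu$ one of $G_u$, $G_v$ is a bubble. Take $B$ to be the $5$-vertex $2$-connected bubble obtained from $K_4$ by subdividing one edge, let $I_0=\{a_1,a_2,a_3\}$ and $R_0=\{b\}$ with $b$ adjacent to all three $a_i$, and attach two disjoint copies of $B$ to each $a_i$ through their contact vertices. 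This graph is special, hence $\alpha=\mu=15$, and $a_1b$ is a bridge; yet $G_{a_1}$ (the vertex $a_1$ plus its two bubbles) is not a bubble, because its unique maximum independent set has a complement spanning \emph{two} edges, and $G_b$ is not a bubble either, since $\alpha(G_b)=10<(n(G_b)-1)/2=11$. Worse, your proposed detection mechanism cannot work even in principle: $G_{a_1}$ does satisfy $\alpha(G_{a_1})=\mu(G_{a_1})=(n(G_{a_1})-1)/2$, so squeezing out these equalities via parity and the $(2,3)$-bound does not ``identify a single leftover edge in the $R$-side''---that equality is simply not the combinatorial signature of a bubble. (Note also that the inequality $2\alpha\le 3\mu$ from Theorem~1 is never tight for a bubble, since there $\alpha=\mu$, so its equality characterization yields no structural information here.)

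The inductive framework is likewise left unresolved, as you concede. The paper's proof avoids bridges and induction entirely: it fixes a maximum independent set $I$, greedily extracts a \emph{maximal} collection of disjoint $2$-connected bubbles $G[V_i]$ with $I_i\subseteq I$ whose contact vertices attach to $I$, and shows that an edge $xy$ inside the leftover set $R_0$ is impossible---either Hall's condition holds in $G'-\{x,y\}$ and one assembles a matching of size larger than $|I|=\mu(G)$, or a Hall violator $I'$ forces, via a degree count modulo $3$, a new bubble $B$ on $I'\cup N_{G'}(I')$ whose contact vertex either again yields a too-large matching or lets $B$ be added to the collection, contradicting maximality. If you want to rescue your plan, you would at least have to choose the bridge so that one of its sides contains no further bridge of $G$, and even then you must exploit the global equality $\alpha(G)=\mu(G)$ (as the paper does through the augmenting-matching contradictions) rather than local counting on $G_u$ alone.
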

\begin{proof}
First, we assume that $G$ is special.
For every $i$ in $\{ 0\}\cup [\ell]$, let $G_i=G[V_i]$.
Let the partite sets $I_0$ and $R_0$ of $G_0$ be as above.
For every $i$ in $[\ell]$, let the bubble $G_i$ 
have partition $(I_i,R_i)$.
Let $I=I_0\cup I_1\cup\ldots \cup I_\ell$
and $R=R_0\cup R_1\cup\ldots \cup R_\ell$.
Since $\alpha(G_i)=\alpha(G_i-z_i)$ for every $i$ in $[\ell]$,
we have 
$\alpha(G)=\alpha(G_0)+\sum_{i=1}^\ell\alpha(G_i)$.
By the vertex degrees, we have $|N_{G_0}(S)|\geq |S|$ for every set $S\subseteq R_0$, which implies $\alpha(G_0)=|I_0|$.
Together with Lemma \ref{lemma1}, we obtain
$\alpha(G)=|I_0|+\sum_{i=1}^\ell(n(G_i)-1)/2$.
Now, let $M$ be a maximum matching in $G$.
By Lemma \ref{lemma1},
the set $M$ contains at most $(n(G_i)-1)/2$
edges of $G_i$ for every $i$ in $[\ell]$,
which implies 
$\mu(G)\leq 
\mu(G')+\sum_{i=1}^\ell(n(G_i)-1)/2$,
where $G'=G[V_0\cup \{ z_1,\ldots,z_\ell\}]$.
Since $G'$ is bipartite with partite sets $I_0$ 
and $R_0\cup \{ z_1,\ldots,z_\ell\}$,
we obtain
$$
|I_0|+\sum_{i=1}^\ell\frac{n(G_i)-1}{2}
=\alpha(G)
\stackrel{(\ref{e1})}{\leq} \mu(G)
\leq |I_0|+\sum_{i=1}^\ell\frac{n(G_i)-1}{2},$$
in particular,
$\alpha(G)=\mu(G)$.

Now, let $G$ be a connected cubic graph with $\alpha(G)=\mu(G)$. 
Let $I$ be a maximum independent set in $G$, 
and let $R=V(G)\setminus I$. 
Let $V_1,\ldots,V_\ell$ be a maximal collection 
of disjoint sets of vertices of $G$ such that,
for every $i$ in $[\ell]$,
the graph $G[V_i]$ is a $2$-connected bubble
with contact vertex $z_i$ and partition $(I_i,R_i)$,
where $I_i\subseteq I$ and $R_i\subseteq R$, and 
the unique neighbor of $z_i$ outside of $V_i$ belongs to $I$.
Let $I_0=I\setminus (I_1\cup \ldots \cup I_\ell)$
and $R_0=R\setminus (R_1\cup \ldots \cup R_\ell)$.
If $R_0$ is an independent set, then $G$ is special.
Therefore, for a contradiction, 
we may assume that $xy$ is an edge of $G$ with $x,y\in R_0$.
Let $G'=G-\bigcup_{i\in [\ell]}(V_i\setminus \{ z_i\})$,
and let $G''=G'-\{ x,y\}$.
See Figure \ref{fig7} for an illustration.

\begin{figure}[H]
$\mbox{}$\hfill
\unitlength 1.5mm 
\linethickness{0.4pt}
\ifx\plotpoint\undefined\newsavebox{\plotpoint}\fi 
\begin{picture}(42,20)(0,0)
\put(25,5){\framebox(16,4)[cc]{$I'$}}
\put(17,4){\framebox(25,6)[lc]{$\,\,I_0$}}
\put(17,14){\framebox(25,6)[lc]{$\,\,R_0$}}
\put(23,15){\framebox(18,4)[cc]{$\,\,\,\,\,\,R'$}}
\put(25,16){\circle*{0.8}}
\put(29,16){\circle*{0.8}}
\put(25,16){\line(1,0){4}}
\put(25,17.5){\makebox(0,0)[cc]{\small $x$}}
\put(29,17.5){\makebox(0,0)[cc]{\small $y$}}
\put(-.07,11.93){\line(1,0){.9767}}
\put(1.883,11.93){\line(1,0){.9767}}
\put(3.837,11.93){\line(1,0){.9767}}
\put(5.79,11.93){\line(1,0){.9767}}
\put(7.744,11.93){\line(1,0){.9767}}
\put(9.697,11.93){\line(1,0){.9767}}
\put(11.651,11.93){\line(1,0){.9767}}
\put(13.604,11.93){\line(1,0){.9767}}
\put(15.558,11.93){\line(1,0){.9767}}
\put(17.511,11.93){\line(1,0){.9767}}
\put(19.465,11.93){\line(1,0){.9767}}
\put(21.418,11.93){\line(1,0){.9767}}
\put(23.372,11.93){\line(1,0){.9767}}
\put(25.325,11.93){\line(1,0){.9767}}
\put(27.279,11.93){\line(1,0){.9767}}
\put(29.232,11.93){\line(1,0){.9767}}
\put(31.186,11.93){\line(1,0){.9767}}
\put(33.139,11.93){\line(1,0){.9767}}
\put(35.092,11.93){\line(1,0){.9767}}
\put(37.046,11.93){\line(1,0){.9767}}
\put(38.999,11.93){\line(1,0){.9767}}
\put(40.953,11.93){\line(1,0){.9767}}
\put(0,7){\makebox(0,0)[cc]{$I$}}
\put(0,17){\makebox(0,0)[cc]{$R$}}
\put(13,17){\circle*{0.8}}
\put(4,17){\circle*{0.8}}
\multiput(4,17)(.054621849,-.033613445){238}{\line(1,0){.054621849}}
\multiput(13,17)(.033557047,-.046979866){149}{\line(0,-1){.046979866}}
\put(4,19){\makebox(0,0)[cc]{$z_1$}}
\put(13,19){\makebox(0,0)[cc]{$z_\ell$}}
\put(9,17){\makebox(0,0)[cc]{$\cdots$}}
\end{picture}
\hfill$\mbox{}$
\caption{The graph $G'$; 
removing from $G'$ the vertices $x$ and $y$ yields $G''$ while
removing from $G'$ all vertices from the bubble $B$ 
except for its contact vertex yields $G'''$.}\label{fig7}
\end{figure}
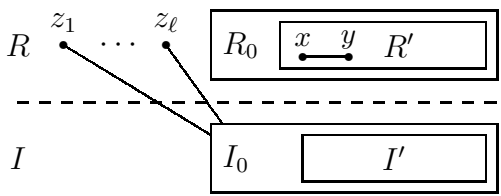
If $G''$ contains a matching $M_0$ that saturates $I_0$,
then, by Lemma \ref{lemma1}, 
the union of $\{ xy\}$,
the matching $M_0$, and 
maximum matchings in each $G[V_i]-z_i$
yields a matching of size more than $|I|=\alpha(G)=\mu(G)$,
which is a contradiction.
Hence, by Hall's theorem,
there is a set $I'\subseteq I_0$
with $|N_{G''}(I')|<|I'|$.
Let $R'=N_{G'}(I')$, and let $B=G[I'\cup R']$.
By the vertex degrees, the independence of $I$, and the edge $xy$,
we obtain $|R'|>|I'|$,
and, hence, $R'=N_{G''}(I')\cup \{ x,y\}$ and $|R'|=|I'|+1$,
see Figure \ref{fig7}.
Since $\sum_{u\in I'}d_{G'}(u)$ is a multiple of $3$,
this implies that $xy$ is the only edge of $G$ within $R'$,
that exactly one vertex in $R'$ has degree $2$ in $B$,
and that all remaining vertices of $B$ have degree $3$ in $B$,
that is, the graph $B$ is a bubble 
with some contact vertex $z$ and partition $(I',R')$.
Since each $z_i$ has degree $1$ in $G'$,
and $B$ contains no vertex of degree less than $2$,
we have $I'\subseteq I_0$ and $R'\subseteq R_0$,
see Figure \ref{fig7}.
By Lemma \ref{lemma1},
we may assume that $B$ is $2$-connected. 
Let $G'''=G'-(V(B)\setminus \{ z\})$.
Let $z'$ be the neighbor of $z$ outside of $V(B)$.
Suppose, for a contradiction, that $z'$ lies in $R_0$.
By the vertex degrees and the independence of $I$,
for every $S\subseteq I_0\setminus I'$,
we have $|N_{G'''}(S)|\geq |S|$,
and, in view of the edge $zz'$,
if $N_{G'''}(S)$ contains $z'$, then $|N_{G'''}(S)|>|S|$,
which implies $|N_{G'''-z'}(S)|\geq |S|$.
By Hall's theorem, 
the graph $G'''-z'$ has a matching saturating $I_0$,
which, by Lemma \ref{lemma1}, 
together with the edge $zz'$,
maximum matchings in each $G[V_i]-z_i$,
and a maximum matching in $B-z$
yields a matching of size more than $|I|=\alpha(G)=\mu(G)$,
which is a contradiction.
Hence, $z'\in I$. 
Now, the set $V(B)$ can be added 
to the collection $V_1,\ldots,V_\ell$,
contradicting its maximality,
which completes the proof.
\end{proof}
For a given connected cubic graph $G$,
the constructive proofs of Lemma \ref{lemma1}
and Theorem \ref{theorem2}
easily allow to design an efficient algorithm
that decides whether $\alpha(G)=\mu(G)$,
and, that returns the partition of $V(G)$
into the sets $V_0,V_1,\ldots,V_\ell$ in this case.
It remains an open problem to characterize the extremal $k$-regular graphs
for every $k$ at least $4$;
it might even be the case that these graphs are NP-hard to recognize.\\[3mm]
{\bf Acknowledgment} We thank
Yair Caro, Randy Davila, and Ryan Pepper
for valuable discussion and for sharing their many crucial examples
of extremal graphs.

\end{document}